\newtheorem{thm}{Theorem} [section]
\newtheorem{prob}[thm]{Problem}
\newtheorem{cor}[thm]{Corollary}
\newtheorem{lem}[thm]{Lemma}
\theoremstyle{definition}
\theoremstyle{remark}
\newtheorem{exa}[thm]{Example}
\numberwithin{equation}{section}
\newcommand{\fq}{{\mathbb F}_{q}}
\newcommand{\fth}{{\mathbb F}_{3}}
\newcommand{\lcm}{\mbox{lcm}}
\newcommand{\rmv}[1]{}
\def\<{\left\langle}
\def\>{\right\rangle}
\begin{document}

\title[The $3$-sparsity of $X^n-1$]
{The $3$-sparsity of $X^n-1$ over finite fields}%
\author{Kaimin Cheng}
\address{School of Mathematics and Information, China West Normal University, Nanchong, 637002, P. R. China}
\email{ckm20@126.com}
\subjclass{Primary 11T06}%
\keywords{Finite fields, cyclotomic polynomials, $3$-sparsity, irreducible factorization}
\date{\today}
\begin{abstract}
Let $q$ be a prime power and $\mathbb{F}_q$ the finite field with $q$ elements. For a positive integer $n$, the binomial $X^n - 1 \in \mathbb{F}_q[X]$ is said to be $3$-sparse over $\mathbb{F}_q$ if every irreducible factor of $X^n-1$ in $\mathbb{F}_q[X]$ is either a binomial or a trinomial. In 2021, Oliveira and Reis characterized all positive integers $n$ for which $X^n-1$ is $3$-sparse over $\mathbb{F}_q$ when $q = 2$ and $q = 4$, and raised the open problem of whether, for any given $q$, there are only finitely many primes $p$ such that $X^p-1$ is $3$-sparse over $\mathbb{F}_q$. In this paper, if $q$ is a power of an odd prime $r$, we then establish that for any positive integer not divisible by $r$, $X^n-1$ is $3$-sparse over $\mathbb{F}_q$ if and only if $n =p_1^{e_1} \cdots p_s^{e_s}$ for some nonnegative integers $e_1, \dots, e_s$, where $p_1, \dots, p_s$ are distinct prime divisors of $q^2 - 1$. This resolves the problem posed by Oliveira and Reis for odd characteristic.
\end{abstract}

\maketitle
\section{Introduction}
Let $q$ be a prime power, $\fq$ be the finite field of order $q$, and $\fq^*$ be the multiplicative group of $\fq$. The factorization of polynomials over $\fq$ is a classical problem in finite field theory, with significant applications in areas such as coding theory~\cite{[Ber68]} and cryptography~\cite{[Len91]}. For a positive integer $n$, consider the polynomial $X^n - 1 \in \fq[X]$. A notable property is that each irreducible factor of $X^n-1$ over $\fq$ corresponds to a cyclic code of length $n$ over $\fq$ (see~\cite{[Van98]}). Consequently, determining all irreducible factors of $X^n-1$ over $\fq$ is of paramount importance.

Over the past few decades, the irreducible factorization of the binomial $X^n-1$ has been a challenging problem. Several authors have investigated the factorization of $X^n-1$ under specific conditions, as explored, for example, in ~\cite{[BGM93],[CLT13],[Mey96],[WW12]}, with more general results obtained in~\cite{[BRS19],[Gra23]}. One of the most significant results is presented in~\cite{[BMOV15]}, which provides an explicit factorization of $X^n-1$ over $\fq$ under the condition that each prime factor of $n$ divides $q^2 - 1$. 

Under certain conditions, an intriguing phenomenon emerges: each irreducible factor of $X^n-1$ over $\fq$ is either a binomial or a trinomial. We refer to such polynomials $X^n-1$ as \emph{3-sparse} over $\fq$. This observation naturally leads us to the question of identifying all positive integers $n$ for which $X^n-1$ is 3-sparse over a given finite field $\fq$. In 2021, Oliveira and Reis \cite{[OR21]} addressed this problem, describing all positive integers $n$ such that $X^n-1$ is 3-sparse over $\fq$ for $q=2$ and $q=4$. They also posed a problem as follows.

\begin{prob}\label{prob1.1}
\cite[Problem 2]{[OR21]} For any given prime power $q$, prove or disprove that there are finitely many prime $p$ such that $X^p-1$ is $3$-sparse over $\fq$.
\end{prob}

The problem of determining the $3$-sparsity of the polynomial $X^n - 1$ over a finite field $\mathbb{F}_q$ of odd characteristic remains unresolved for arbitrary odd $q$. In this paper, we investigate the $3$-sparsity of $X^n - 1$ over finite fields $\mathbb{F}_q$ where $q$ is a power of an odd prime. We provide a complete characterization of the $3$-sparsity of $X^n - 1$ over such fields.

The primary result of this paper is the following theorem.
\begin{thm}\label{thm1.2}
Let $q$ be a power of an odd prime $r$, and let $p_1, \dots, p_s$ denote the distinct odd prime divisors of $q^2 - 1$. For any positive integer $n$ not divisible by $r$, the polynomial $X^n - 1$ is $3$-sparse over $\mathbb{F}_q$ if and only if $n = p_1^{e_1} \cdots p_s^{e_s}$ for some nonnegative integers $e_1, \dots, e_s$.
\end{thm}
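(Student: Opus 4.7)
The theorem has two directions. \emph{Sufficiency}---that every $n$ whose prime factors divide $q^2-1$ makes $X^n-1$ 3-sparse---follows immediately from the explicit factorization of~\cite{[BMOV15]}, which expresses each irreducible factor of such an $X^n-1$ as a binomial or trinomial in $\fq[X]$.

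For \emph{necessity}, assume $X^n-1$ is 3-sparse with $\gcd(n,r)=1$, and fix a prime $p\mid n$ with $p\ne r$; the goal is to prove $d:=\mathrm{ord}_p(q)\le 2$, equivalently $p\mid q^2-1$. Since $\Phi_p\mid X^n-1$, every irreducible factor of $\Phi_p$ over $\fq$ is a binomial or trinomial of degree $d$. A binomial factor $X^d-c$ is impossible when $d\ge 2$, because its $d$ roots would lie in $\mu_d\cap\mu_p=\mu_{\gcd(d,p)}=\{1\}$ (using $d\mid p-1$), contradicting $d\ge 2$. Hence every factor is a trinomial $X^d+b_iX^{m_i}+c_i$; tracking the product of roots along a Frobenius orbit shows $c_i=(-1)^d$, since $p\mid q^d-1$ together with $p\nmid q-1$ forces $1+q+\cdots+q^{d-1}\equiv 0\pmod p$, so the product equals $\alpha^0=1$.

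The heart of the argument is to rule out $d\ge 3$. When $d\ge 4$ is even, $-1\in\langle q\rangle\pmod p$ (the cyclic group $\langle q\rangle$ of even order $d$ contains the unique involution), so every Frobenius orbit is closed under inversion and each trinomial factor is self-reciprocal, forcing the form $f_i=X^d+b_iX^{d/2}+1$. Since $d/2\ne d-1$, the coefficient of $X^{d-1}$ in each $f_i$ vanishes, and summing over all factors yields $\sum_{\zeta\ne 1,\,\zeta^p=1}\zeta=0$; but this sum also equals $-1$ (read off from $\Phi_p=X^{p-1}+\cdots+X+1$), contradicting $\mathrm{char}\,\fq\ne 2$.

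The case $d\ge 3$ odd is the main obstacle. Here $-1\notin\langle q\rangle$, so cosets of $H=\langle q\rangle$ in $(\Z/p\Z)^{*}$ pair up under $g\mapsto -g$; in each pair the two trinomials take the forms $X^d+bX^m-1$ and $X^d-bX^{d-m}-1$. My plan is to apply Newton's identities to each trinomial factor to express the power sums $p_k^{(i)}:=\sum_{\alpha:\,f_i(\alpha)=0}\alpha^k$ as explicit polynomials in $b_i$, then compare against the global identities $\sum_i p_k^{(i)}=-1$, valid for $p\nmid k$ since the left side equals $\sum_{\zeta^p=1,\zeta\ne 1}\zeta^k$. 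For carefully chosen $k$---typically $k=1,2,\ldots,d$---the pairing relations $b_{-gH}=-b_{gH}$ and $m_{-gH}=d-m_{gH}$ collapse the system to a handful of polynomial relations among power sums of the $b_i$, and I expect these to force an impossible identity of the form $p\equiv 0\pmod r$ in $\fq$, contradicting $p\ne r$. This combinatorial bookkeeping of Gauss periods and Newton-type identities is where the real work lies, in contrast to the even case whose single-coefficient argument closes immediately.
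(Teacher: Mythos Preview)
Your sufficiency direction and the even-$d$ branch of necessity are fine, but the odd-$d$ branch is a genuine gap: you have only outlined a plan involving Newton's identities and a hoped-for collapse of the resulting system, with the key step left as ``I expect these to force an impossible identity''. Nothing in the proposal actually carries this out, and it is not clear that the bookkeeping closes; for instance, with $d=3$ the paired factors are $X^{3}+bX-1$ and $X^{3}-bX^{2}-1$, and the low-order power-sum identities you propose do not obviously produce a contradiction without further input. (There is also a small slip in your binomial argument: the roots of $X^{d}-c$ are not in $\mu_{d}$ unless $c=1$. The conclusion is still correct---from $c=\alpha^{d}$ with $\alpha\in\mu_{p}$ one gets $c^{p}=1$, and since $d\ge 2$ forces $p\nmid q-1$ this gives $c=1$, whence $X-1\mid X^{d}-1$ contradicts irreducibility---but the justification needs fixing.)

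The paper avoids the even/odd split entirely with a short uniform argument. Since the coefficient of $X$ in $\Phi_{p}(X)=X^{p-1}+\cdots+X+1$ is nonzero, at least one irreducible trinomial factor has the special shape $g(X)=X^{t}+aX+b$ with $a,b\in\fq^{*}$ and $t=\mathrm{ord}_{p}(q)$. If $\xi$ is a root of $g$, then the minimal polynomial $g_{2}$ of $\xi^{2}$ (also a primitive $p$-th root, so $g_{2}\mid\Phi_{p}$) is determined by $g_{2}(X^{2})=(-1)^{t}g(X)g(-X)$; expanding gives
\[
g_{2}(X)=
\begin{cases}
X^{t}+2bX^{t/2}-a^{2}X+b^{2}, & t\ \text{even},\\
X^{t}+2aX^{(t+1)/2}+a^{2}X-b^{2}, & t\ \text{odd},
\end{cases}
\]
which for $t\ge 3$ has four distinct exponents with nonzero coefficients (using $\mathrm{char}\,\fq$ odd). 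Thus $\Phi_{p}$ acquires a quadrinomial factor, contradicting $3$-sparsity. This single computation replaces both your self-reciprocal trick in the even case and the unfinished Newton-identity program in the odd case.
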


Theorem \ref{thm1.2} fully characterizes the $3$-sparsity of $X^n - 1$ over finite fields of odd characteristic. Additionally, we provide the irreducible factorization of $X^n - 1$ over $\mathbb{F}_q$; see the proof of Theorem \ref{thm1.2} in Section 3 for details, and two examples in Section 4.

As a direct consequence of Theorem \ref{thm1.2}, we resolve Problem \ref{prob1.1} as follows.
\begin{cor}\label{cor1.3}
For any odd prime power $q$, there exist only finitely many primes $p$ such that $X^p - 1$ is $3$-sparse over $\mathbb{F}_q$.
\end{cor}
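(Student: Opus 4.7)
The plan is to deduce Corollary \ref{cor1.3} as an almost immediate consequence of Theorem \ref{thm1.2}. Fix an odd prime power $q$ with characteristic $r$, and let $p_1, \dots, p_s$ denote the distinct odd prime divisors of $q^2 - 1$ from the statement of the theorem. I aim to show that the set of primes $p$ for which $X^p - 1$ is $3$-sparse over $\mathbb{F}_q$ is contained in the finite set $\{r, p_1, \dots, p_s\}$, which immediately yields the conclusion.

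The first step is to dispose of the characteristic case $p = r$ separately, since Theorem \ref{thm1.2} applies only when $n$ is coprime to $r$. Here the identity $X^r - 1 = (X-1)^r$ in $\mathbb{F}_q[X]$ shows that the only irreducible factor of $X^p - 1$ is the binomial $X - 1$, so $X^p - 1$ is trivially $3$-sparse. Thus this case contributes at most the single prime $r$ to the set under consideration.

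The second step treats every prime $p \neq r$ by invoking Theorem \ref{thm1.2} directly with $n = p$. Because $p$ is then not divisible by $r$, the theorem asserts that $X^p - 1$ is $3$-sparse over $\mathbb{F}_q$ if and only if $p = p_1^{e_1} \cdots p_s^{e_s}$ for some nonnegative integers $e_1, \dots, e_s$. Since $p$ is prime, such a representation forces $p = p_i$ for exactly one index $i$, with all other exponents equal to zero. Combining both cases, the set of admissible primes is contained in $\{r, p_1, \dots, p_s\}$, which has cardinality at most $s+1$ and is finite because $q^2 - 1$ has only finitely many prime divisors. No real obstacle arises in the argument: Theorem \ref{thm1.2} does all the heavy lifting, and the corollary merely specializes that exhaustive characterization to the case where $n$ itself is prime.
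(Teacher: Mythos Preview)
Your proposal is correct and follows essentially the same approach as the paper, which simply presents Corollary~\ref{cor1.3} as a direct consequence of Theorem~\ref{thm1.2} without a separate argument. Your explicit treatment of the characteristic case $p=r$ (where $X^r-1=(X-1)^r$) is a welcome completeness check, since Theorem~\ref{thm1.2} only applies to $n$ coprime to $r$.
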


The paper is structured as follows. In Section 2, we present preliminary lemmas essential for proving the main result. In Section 3, we provide the proof of Theorem \ref{thm1.2}. In Section 4, we include two examples demonstrating the application of our results to derive the irreducible factorization of $X^n - 1$ when it is 3-sparse.

\section{Lemmas}

In this section, we introduce several lemmas that will be utilized later.

Let $q$ be a prime power and $\mathbb{F}_q$ denote the finite field with $q$ elements. For any positive integer $d$, let $\Phi_d(X)$ denote the $d$-th cyclotomic polynomial over $\mathbb{F}_q$, defined as follows:
$$\Phi_d(X)=\prod_{\substack{i=1\\
\gcd(i,d)=1}}^d(X-\xi^i),$$
where $\xi$ is a primitive $d$th root of unity. Clearly, 
$$\Phi_d(X)=X^{d-1}+X^{d-2}+\cdots+X+1$$
if $d$ is a prime number. For integers $a\ge 1$ and $b\ge 2$ with $\gcd(a,b)=1$, let $\text{ord}_b(a)$ be the multiplicative order of $a$ modulo $b$. The first two lemmas are basic results of cyclotomic polynomials over finite fields.
\begin{lem}\label{lem2.1}
\cite[Exercise 2.57]{[LN97]} Let $r$ be a prime, and let $m,n$ be positive integer. Then
$$ \Phi_{mr^n}(X)=\Phi_{mr}(X^{r^{n-1}}).$$
\end{lem}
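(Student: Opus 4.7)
The plan is to argue by induction on $n$, invoking the standard recursion formulas for cyclotomic polynomials. The base case $n=1$ is immediate since both sides are literally $\Phi_{mr}(X)$.

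For the inductive step with $n \geq 2$, I would appeal to the classical identity $\Phi_{dr}(Y) = \Phi_d(Y^r)$ valid whenever $r \mid d$ (a statement about $\Phi_d \in \mathbb{Z}[Y]$ that transfers to $\fq$ provided the characteristic does not divide $dr$). Applying it with $d = mr^{n-1}$, which is divisible by $r$ since $n-1 \geq 1$, gives $\Phi_{mr^n}(X) = \Phi_{mr^{n-1}}(X^r)$. Substituting $X \mapsto X^r$ in the inductive hypothesis $\Phi_{mr^{n-1}}(X) = \Phi_{mr}(X^{r^{n-2}})$ then yields $\Phi_{mr^{n-1}}(X^r) = \Phi_{mr}(X^{r^{n-1}})$, and composing the two equalities closes the induction. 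A more hands-on alternative is to compare roots and degrees directly: if $\zeta$ is a primitive $mr^n$-th root of unity in an algebraic closure, then $\zeta^{r^{n-1}}$ has multiplicative order $mr^n/\gcd(mr^n,r^{n-1}) = mr$ regardless of whether $r$ divides $m$, so $\zeta$ is a root of $\Phi_{mr}(X^{r^{n-1}})$; a short check that $\varphi(mr^n) = r^{n-1}\varphi(mr)$ (splitting on $r \mid m$ versus $r \nmid m$) matches the degrees of these two monic, squarefree polynomials and forces equality.

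The only delicate point is to distinguish the two cyclotomic recursion rules $\Phi_{dp}(X) = \Phi_d(X^p)$ (when $p \mid d$) from $\Phi_{dp}(X) = \Phi_d(X^p)/\Phi_d(X)$ (when $p \nmid d$); but in the induction above only the first rule is invoked, since $r$ already divides $mr^{n-1}$ for $n \geq 2$, so no case analysis actually enters the argument. Given that Lemma~\ref{lem2.1} is cited from Lidl and Niederreiter as a standard exercise, I anticipate no serious obstacle.
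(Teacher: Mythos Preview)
Your argument is correct and standard; both the inductive reduction via the rule $\Phi_{dr}(X)=\Phi_d(X^r)$ for $r\mid d$ and the alternative root-and-degree count go through without difficulty. Note, however, that the paper does not supply its own proof of Lemma~\ref{lem2.1} at all: it simply cites the result as \cite[Exercise~2.57]{[LN97]} and moves on, so there is nothing to compare your approach against beyond observing that you have written out what the paper leaves as a reference. One minor remark: since the identity holds already in $\mathbb{Z}[X]$ (your inductive argument works there verbatim), it transfers to $\mathbb{F}_q[X]$ regardless of characteristic, so the caveat about the characteristic not dividing $dr$ is unnecessary for the bare polynomial equality---it only matters if one wants the polynomials to remain separable.
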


\begin{lem}\label{lem2.2}
\cite[Theorem 2.47]{[LN97]} Let $n$ be a positive integer. Then $$X^n-1=\prod_{d\mid n}\Phi_{d}(X).$$
Moreover, each $\Phi_d(X)$ is the product of $\frac{\phi(d)}{\text{ord}_d(q)}$  distinct irreducible monic polynomials in $\fq[X]$ with the same degree $\text{ord}_d(q)$, where $\phi$ is the Euler totient function. 
\end{lem}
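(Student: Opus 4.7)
The plan is to prove both assertions of Lemma \ref{lem2.2} by passing to the algebraic closure $\overline{\fq}$ and analyzing the $n$-th roots of unity together with the action of the Frobenius endomorphism. Throughout we implicitly take $\gcd(n,q)=1$, which is the natural setting in which $\Phi_d(X)$ is defined (ensuring a primitive $d$-th root of unity exists in $\overline{\fq}$ for every $d \mid n$).

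For the first identity $X^n - 1 = \prod_{d \mid n}\Phi_d(X)$, the first step is to observe that $X^n - 1$ is separable: its formal derivative $nX^{n-1}$ is coprime to $X^n-1$ because $\gcd(n,q)=1$. Hence $X^n - 1$ has exactly $n$ distinct roots in $\overline{\fq}$, namely the $n$-th roots of unity. Each such root has a well-defined multiplicative order $d \mid n$ and is a primitive $d$-th root of unity; conversely, every primitive $d$-th root of unity with $d \mid n$ is itself an $n$-th root of unity. Thus the set of $n$-th roots of unity partitions as a disjoint union, over $d \mid n$, of the primitive $d$-th roots, and comparing root sets of the two monic polynomials (with matching degrees $n = \sum_{d\mid n}\phi(d)$) yields the identity.

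For the second claim we appeal to the Frobenius automorphism $\sigma:\alpha\mapsto \alpha^q$ of $\overline{\fq}$ over $\fq$. The irreducible factors in $\fq[X]$ of any separable $f \in \fq[X]$ are in bijection with the $\sigma$-orbits on the root set of $f$, and the degree of each factor equals the length of the corresponding orbit (since for $\alpha\in\overline{\fq}$ whose $\sigma$-orbit has length $k$, its minimal polynomial over $\fq$ is $\prod_{i=0}^{k-1}(X-\alpha^{q^i})$). Applying this to $\Phi_d(X)$, fix a primitive $d$-th root of unity $\xi$ and consider the orbit $\{\xi,\xi^q,\xi^{q^2},\ldots\}$; its length is the least $k\ge 1$ with $\xi^{q^k-1}=1$, equivalently $q^k\equiv 1\pmod d$, which is $\text{ord}_d(q)$. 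Since $\sigma$ preserves orders in the multiplicative group of $\overline{\fq}$, every element of the orbit is again a primitive $d$-th root of unity. Hence the $\phi(d)$ primitive $d$-th roots of unity partition into $\sigma$-orbits of uniform length $\text{ord}_d(q)$, producing exactly $\phi(d)/\text{ord}_d(q)$ distinct monic irreducible factors of $\Phi_d(X)$, each of degree $\text{ord}_d(q)$.

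The main obstacle is essentially bookkeeping: making explicit the separability of $X^n-1$ (which justifies the implicit hypothesis $\gcd(n,q)=1$) and cleanly invoking the correspondence between irreducible factors in $\fq[X]$ and Frobenius orbits of roots in $\overline{\fq}$. Both are standard facts in finite field theory, so the argument amounts to assembling these ingredients rather than overcoming any genuine technical difficulty.
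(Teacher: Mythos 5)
Your argument is correct and is essentially the standard proof of this result: the paper itself offers no proof, simply citing \cite[Theorem 2.47]{[LN97]}, and the Lidl--Niederreiter proof is the same partition of the $n$-th roots of unity by multiplicative order together with the Frobenius-orbit description of irreducible factors that you give. Your explicit remark that $\gcd(n,q)=1$ is needed (implicit in the paper, where $n$ is always coprime to the characteristic $r$) is a worthwhile clarification rather than a deviation.
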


The following well-known result is one of the so-called lifting the exponent lemmas.
\begin{lem}\label{lem2.3} \cite[Proposition 1]{[Bey77]} Let $p$ be a prime and $a\ge 2$ be an integer. Denote by $v_p(n)$ the $p$-adic valuation of the positive integer $n$. The following are true for any positive integer $k$.
\begin{enumerate}[label=(\alph*), left=0pt]
\item If $p$ is odd and $p\mid (a-1)$, then
$$v_p(a^k-1)=v_p(a-1)+v_p(k).$$
\item If $p=2$ and $a$ is odd, then
$$v_2(a^k-1)=\begin{cases}
v_2(a-1),&\text{if}\ k\ \text{is\ odd},\\
v_2(a^2-1)+v_2(k)-1,&\text{if}\ k\ \text{is\ even}.
\end{cases}$$
\end{enumerate}
\end{lem}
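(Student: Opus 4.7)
My plan is to reduce both parts to the computation of $v_p(a^{p^m} - 1)$ by first peeling off the part of $k$ coprime to $p$. Write $k = p^m \ell$ with $\gcd(\ell, p) = 1$ and factor
$$a^k - 1 = \bigl(a^{p^m}\bigr)^\ell - 1 = \bigl(a^{p^m} - 1\bigr) \cdot \sum_{i=0}^{\ell - 1} a^{p^m i}.$$
In both (a) and (b) one has $a \equiv 1 \pmod p$ (with $p = 2$ in (b), since $a$ is odd), so $a^{p^m} \equiv 1 \pmod p$ and the cofactor is $\equiv \ell \pmod p$, hence a unit modulo $p$. So the problem reduces to computing $v_p(a^{p^m} - 1)$, which I will do by induction on $m$.

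For part (a), the inductive step amounts to showing $v_p(a^p - 1) = v_p(a - 1) + 1$ whenever $p$ is odd and $v_p(a - 1) = t \geq 1$. Writing $a = 1 + p^t u$ with $\gcd(u, p) = 1$ and expanding binomially,
$$a^p - 1 = \sum_{i=1}^{p} \binom{p}{i} p^{ti} u^i,$$
the $i = 1$ term has valuation exactly $t + 1$, while for $2 \leq i \leq p - 1$ we have $v_p(\binom{p}{i}) = 1$ and hence contribution $1 + it \geq 1 + 2t > t + 1$, and the $i = p$ term contributes $tp > t + 1$ (using $p \geq 3$ and $t \geq 1$). Therefore $v_p(a^p - 1) = t + 1$, and iterating gives $v_p(a^{p^m} - 1) = t + m$. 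Combined with the cofactor reduction, this yields the claimed formula.

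For part (b), if $k$ is odd the cofactor argument applies directly with $m = 0$ to give $v_2(a^k - 1) = v_2(a - 1)$. If $k$ is even, write $k = 2^m \ell$ with $m \geq 1$ and $\ell$ odd, reduce as above to $v_2(a^{2^m} - 1)$, and iteratively factor $a^{2^m} - 1 = (a^{2^{m-1}} - 1)(a^{2^{m-1}} + 1)$. The crucial observation is that for any $j \geq 1$ the integer $a^{2^j}$ is a square of an odd integer and hence $\equiv 1 \pmod{8}$, which forces $v_2(a^{2^j} + 1) = 1$. An induction on $m$ (base case $m = 1$, which is trivial) then gives $v_2(a^{2^m} - 1) = v_2(a^2 - 1) + (m - 1) = v_2(a^2 - 1) + v_2(k) - 1$.

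The main subtlety I expect to navigate carefully is the off-by-one in part (b): because the first squaring may already jump $v_2(a - 1)$ to $v_2(a^2 - 1) = v_2(a - 1) + v_2(a + 1)$, which can exceed $v_2(a - 1) + 1$ when $a \equiv 3 \pmod 4$, a uniform formula is only obtained by phrasing it in terms of $v_2(a^2 - 1)$ rather than $v_2(a - 1)$. The remainder is routine: a binomial expansion estimate for odd $p$ and an elementary $\pmod 8$ computation for $p = 2$.
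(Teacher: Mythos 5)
The paper does not prove this lemma at all; it is quoted as \cite[Proposition 1]{[Bey77]} and used as a black box, so there is no internal proof to compare against. Your argument is a correct, self-contained proof of the standard lifting-the-exponent statement. The two key reductions are sound: the cofactor $\sum_{i=0}^{\ell-1} a^{p^m i} \equiv \ell \pmod p$ is indeed a $p$-adic unit once $a \equiv 1 \pmod p$ and $\gcd(\ell,p)=1$, and the valuation bookkeeping in the binomial expansion of $(1+p^t u)^p$ is right: the $i=1$ term has valuation exactly $t+1$, the middle terms have valuation $1+it \geq 1+2t > t+1$, and the top term has valuation $tp \geq t+2 > t+1$ for $p \geq 3$, $t \geq 1$ --- this is precisely where oddness of $p$ enters and where the argument would fail for $p=2$. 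Your handling of part (b) via $a^{2^j} \equiv 1 \pmod 8$ for $j \geq 1$, forcing $v_2(a^{2^j}+1)=1$, correctly isolates the one factor ($a+1$ at the first squaring) whose $2$-adic valuation is not controlled, and your remark about why the formula must be phrased in terms of $v_2(a^2-1)$ rather than $v_2(a-1)$ is exactly the right diagnosis. Incidentally, the same binomial-expansion induction appears in the paper's own proof of Lemma \ref{lem2.5}, so your techniques are consistent with how the author works elsewhere.
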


\begin{lem}\label{lem2.4}
Let $m_1,m_2\ge 2$ be integers with $\gcd(m_1,m_2)=1$. Then for any positive $a$ with $\gcd(m_1,a)=1$ and $\gcd(m_2,a)=1$, we have
$$\text{ord}_{m_1m_2}(a)=\lcm(\text{ord}_{m_1}(a),\text{ord}_{m_2}(a)).$$
\end{lem}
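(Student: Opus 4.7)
The plan is to prove the identity directly via the Chinese Remainder Theorem (CRT), following the standard divisibility argument. Set $d := \text{ord}_{m_1 m_2}(a)$, $d_1 := \text{ord}_{m_1}(a)$, $d_2 := \text{ord}_{m_2}(a)$, and $L := \text{lcm}(d_1, d_2)$. The goal is to establish $d = L$ by showing each divides the other.

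First, I would show $L \mid d$. Since $a^d \equiv 1 \pmod{m_1 m_2}$, reducing modulo $m_i$ gives $a^d \equiv 1 \pmod{m_i}$ for $i = 1, 2$. By the defining property of the multiplicative order, $d_i \mid d$ for both $i$, hence $L = \text{lcm}(d_1, d_2) \mid d$.

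Next, I would show $d \mid L$. By definition, $a^{d_i} \equiv 1 \pmod{m_i}$; since $d_i \mid L$, also $a^L \equiv 1 \pmod{m_i}$ for $i = 1, 2$. This means $m_1 \mid a^L - 1$ and $m_2 \mid a^L - 1$. Because $\gcd(m_1, m_2) = 1$, the CRT (or equivalently the fact that $m_1 m_2 = \text{lcm}(m_1, m_2)$ in this coprime case) yields $m_1 m_2 \mid a^L - 1$, i.e., $a^L \equiv 1 \pmod{m_1 m_2}$. Therefore $d \mid L$.

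Combining the two divisibilities gives $d = L$, which is the claimed equality. There is no real obstacle here; this is a standard application of CRT, and the only point deserving care is the explicit invocation of $\gcd(m_1, m_2) = 1$ to pass from simultaneous congruences modulo $m_1$ and $m_2$ to a congruence modulo $m_1 m_2$.
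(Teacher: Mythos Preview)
Your proof is correct and follows essentially the same approach as the paper: both establish the equality by showing mutual divisibility between $\text{ord}_{m_1 m_2}(a)$ and $\lcm(\text{ord}_{m_1}(a),\text{ord}_{m_2}(a))$, using the coprimality $\gcd(m_1,m_2)=1$ to pass from congruences modulo $m_1$ and $m_2$ to one modulo $m_1 m_2$. The only cosmetic difference is notation and the order in which the two divisibilities are proved.
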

\begin{proof}
Let $t_1 = \text{ord}_{m_1}(a)$, $t_2 = \text{ord}_{m_2}(a)$, and $t = \text{ord}_{m_1 m_2}(a)$, where $\gcd(m_1, m_2) = 1$. Since $t_i$ is the order of $a$ modulo $m_i$ for $i=1,2$, we have $a^{t_i} \equiv 1 \pmod{m_i}$. Consider the least common multiple $\ell = \lcm(t_1, t_2)$. Then, for $i=1,2$,
$$a^{\ell} = \left(a^{t_i}\right)^{\ell / t_i} \equiv 1 \pmod{m_i},$$
since $\ell / t_i$ is an integer. As $\gcd(m_1, m_2) = 1$, it follows that
$$a^{\ell} \equiv 1 \pmod{m_1 m_2}.$$
Thus, $t$, the order of $a$ modulo $m_1 m_2$, divides $\ell$, so $t \mid \lcm(t_1, t_2)$.

Conversely, since $a^t \equiv 1 \pmod{m_1 m_2}$, we have $a^t \equiv 1 \pmod{m_i}$ for $i=1,2$. Hence, $t_1 \mid t$ and $t_2 \mid t$, implying that $\lcm(t_1, t_2) \mid t$. Therefore, since $t \mid \lcm(t_1, t_2)$ and $\lcm(t_1, t_2) \mid t$, we conclude that $t = \lcm(t_1, t_2)$.
\end{proof}

\begin{lem}\label{lem2.5}
Let $q$ be an odd prime power and $p$ be an odd prime divisor of $q^2-1$. Let $k_0$ be the $2$-adic valuation of $q-1$, $k_1$ the $p$-adic valuation of $q-1$, and $k_2$ the $p$-adic valuation of $q+1$. Then the following statements are true:
\begin{enumerate}[label=(\alph*), left=0pt]
\item For any positive integer $k$, the multiplicative order of $q$ modulo $2^k$ is given by
$$\text{ord}_{2^k}(q) = \begin{cases} 
1, & \text{if } 1\le k\le k_0, \\
2^{k-k_0}, & \text{if } k>k_0.
\end{cases}$$
\item For any nonnegative integers $k$, the multiplicative order of $q$ modulo $p^{k}$ is given by
$$\text{ord}_{p^{k}}(q) = \begin{cases} 
1, & \text{if } p\mid (q-1)\ \text{and}\ 1\le k\le k_1, \\
p^{k-k_1}, & \text{if } p\mid (q-1)\ \text{and}\ k>k_1,\\
2, & \text{if } p\mid (q+1)\ \text{and}\ 1\le k\le k_2, \\
2p^{k-k_2}, & \text{if }  p\mid (q+1)\ \text{and}\ k>k_1.
\end{cases}$$
\end{enumerate}
\end{lem}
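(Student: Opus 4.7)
The plan is to characterize $\text{ord}_{\ell^k}(q)$, for $\ell\in\{2,p\}$, as the smallest positive integer $d$ with $v_\ell(q^d-1)\ge k$, and then compute that valuation through Lemma~\ref{lem2.3}.

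For part (a), the range $1\le k\le k_0$ is immediate, since $2^k\mid 2^{k_0}\mid q-1$ forces $q\equiv 1\pmod{2^k}$. For $k>k_0$, I would invoke Lemma~\ref{lem2.3}(b): odd $d$ produces $v_2(q^d-1)=k_0<k$ and is therefore excluded, while for even $d$ the lemma gives $v_2(q^d-1)=v_2(q^2-1)+v_2(d)-1$, so the order is the smallest even $d$ satisfying $v_2(d)\ge k-v_2(q^2-1)+1$; substituting the identity $v_2(q^2-1)=k_0+1$ then reduces this to $d=2^{k-k_0}$.

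For part (b), I would split into the cases $p\mid q-1$ and $p\mid q+1$, which are mutually exclusive for an odd prime $p$ since it would otherwise have to divide $2$. In the first case, the range $1\le k\le k_1$ again gives order $1$ directly, and for $k>k_1$ Lemma~\ref{lem2.3}(a) yields $v_p(q^d-1)=k_1+v_p(d)$, whose minimal solution to $v_p(d)\ge k-k_1$ is $d=p^{k-k_1}$. In the second case, $q\equiv -1\pmod p$ forces $\text{ord}_p(q)=2$, so any $d$ with $q^d\equiv 1\pmod{p^k}$ must be even; writing $d=2e$ reduces the problem to computing $e=\text{ord}_{p^k}(q^2)$. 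Applying Lemma~\ref{lem2.3}(a) to the base $q^2$, with $v_p(q^2-1)=v_p(q+1)=k_2$ (because $p\nmid q-1$ here), gives $v_p((q^2)^e-1)=k_2+v_p(e)$; minimizing $e$ then yields $e=1$ when $k\le k_2$ (so $d=2$) and $e=p^{k-k_2}$ when $k>k_2$ (so $d=2p^{k-k_2}$).

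The main technical point I anticipate is the parity reduction in the $p\mid q+1$ subcase of part (b): one must verify that $\text{ord}_{p^k}(q)=2\cdot\text{ord}_{p^k}(q^2)$, which rests on the identification $\text{ord}_p(q)=2$ and on the equivalence $q^{2e}\equiv 1\pmod{p^k}\Leftrightarrow(q^2)^e\equiv 1\pmod{p^k}$. Apart from this, every step is a mechanical substitution into LTE once the correct base has been chosen, and no additional input beyond Lemma~\ref{lem2.3} is required.
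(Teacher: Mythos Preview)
Your overall strategy coincides with the paper's: both parts are reduced to Lemma~\ref{lem2.3}, and your treatment of the $p\mid q+1$ subcase by passing to the base $q^2$ is tidier than the paper's ``analogous, omitted for brevity'' sketch. Part (b) goes through exactly as you describe.

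There is, however, a genuine error in part (a). The claimed identity $v_2(q^2-1)=k_0+1$ holds only when $q\equiv 1\pmod 4$: if $q\equiv 3\pmod 4$ then $k_0=v_2(q-1)=1$ while $v_2(q+1)\ge 2$, so $v_2(q^2-1)=1+v_2(q+1)\ge 3>k_0+1$. In that regime your LTE computation for even $d$ gives the order as $2^{\max(1,\,k-v_2(q^2-1)+1)}$ rather than $2^{k-k_0}$; for instance $q=3$, $k=3$ yields $\text{ord}_8(3)=2$, whereas the stated formula predicts $2^{3-1}=4$. This is not a defect peculiar to your write-up: the formula in part~(a) of the lemma is itself false for $q\equiv 3\pmod 4$, and the paper's own minimality step contains the matching slip (it asserts ``$v_2(q^t-1)\le v_2(q-1)+v_2(t)-1$'', which is not what Lemma~\ref{lem2.3}(b) says for even $t$). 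Restricted to $q\equiv 1\pmod 4$, your argument is correct as written.
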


\begin{proof}
Let $k_0 = v_2(q-1)$ denote the $2$-adic valuation of $q-1$. For $k \leq k_0$, we observe that $q \equiv 1 \pmod{2^k}$ since $2^k$ divides $q-1$. Thus, $\text{ord}_{2^k}(q) = 1$. For $k \geq k_0 + 1$, we prove that $\text{ord}_{2^k}(q) = 2^{k - k_0}$ using induction on $k$.

First, we aim to show that
\begin{equation}\label{c2-1}
q^{2^{k - k_0}} \equiv 1 \pmod{2^k}
\end{equation}
for $k \geq k_0 + 1$. For the base case, consider $k = k_0 + 1$. Since $q-1 = 2^{k_0} \cdot m$ with $m$ odd, we compute
\[
q^2 - 1 = (q-1)(q+1) = 2^{k_0} \cdot m \cdot (q+1).
\]
As $v_2(q+1) \geq 1$ (since $q$ is odd), we have $v_2(q^2 - 1) \geq k_0 + 1$, so $q^2 \equiv 1 \pmod{2^{k_0 + 1}}$, and \eqref{c2-1} holds for $k = k_0 + 1$.

For the inductive step, assume \eqref{c2-1} holds for $k-1 \geq k_0 + 1$, i.e., $q^{2^{k-1 - k_0}} \equiv 1 \pmod{2^{k-1}}$. Thus, $q^{2^{k-1 - k_0}} = 1 + a \cdot 2^{k-1}$ for some integer $a$. Squaring both sides,
\[
q^{2^{k - k_0}} = \left( q^{2^{k-1 - k_0}} \right)^2 = (1 + a \cdot 2^{k-1})^2 = 1 + 2a \cdot 2^{k-1} + a^2 \cdot 2^{2k-2}.
\]
Since $2k-2 \geq k$ for $k \geq k_0 + 1 \geq 2$, the terms $2a \cdot 2^{k-1} = 2a \cdot 2^{k-1}$ and $a^2 \cdot 2^{2k-2}$ are divisible by $2^k$. Thus, $q^{2^{k - k_0}} \equiv 1 \pmod{2^k}$, completing the induction.

To show that $2^{k - k_0}$ is the smallest positive integer $t$ such that $q^t \equiv 1 \pmod{2^k}$, suppose there exists $t$ with $1 \leq t < 2^{k - k_0}$ such that $q^t \equiv 1 \pmod{2^k}$. This implies $v_2(q^t - 1) \geq k$. By Lemma \ref{lem2.3}(b), $v_2(q^t - 1) \leq v_2(q-1) + v_2(t) - 1 = k_0 + v_2(t) - 1$. Since $t < 2^{k - k_0}$, we have $v_2(t) < k - k_0$, so
\[
v_2(q^t - 1) < k_0 + (k - k_0) - 1 = k - 1 < k,
\]
a contradiction. Thus, $\text{ord}_{2^k}(q) = 2^{k - k_0}$ for $k > k_0$, proving part (a).

For part (b), let $p$ be an odd prime dividing $q^2 - 1$, and let $k \geq 1$. We consider two cases based on whether $p$ divides $q-1$ or $q+1$.

\textbf{Case 1: $p \mid q-1$.} Let $k_1 = v_p(q-1)$ be the $p$-adic valuation of $q-1$. For $1 \leq k \leq k_1$, since $p^k$ divides $q-1$, we have $q \equiv 1 \pmod{p^k}$, so $\text{ord}_{p^k}(q) = 1$. For $k > k_1$, we prove $\text{ord}_{p^k}(q) = p^{k - k_1}$ by showing
\begin{equation}\label{c2-2}
q^{p^{k - k_1}} \equiv 1 \pmod{p^k}
\end{equation}
via induction on $k$. For the base case $k = k_1 + 1$, write $q-1 = p^{k_1} \cdot m$ with $\gcd(p, m) = 1$. Then,
\[
q^p = (1 + p^{k_1} m)^p = \sum_{i=0}^p \binom{p}{i} (p^{k_1} m)^i.
\]
For $i \geq 1$, $v_p((p^{k_1} m)^i) = i k_1 + v_p(m) \geq k_1 + 1$, and $p \mid \binom{p}{i}$ for $1 \leq i \leq p-1$, so $v_p\left( \binom{p}{i} (p^{k_1} m)^i \right) \geq i k_1 + 1 \geq k_1 + 1$. Thus, $q^p \equiv 1 \pmod{p^{k_1 + 1}}$, and \eqref{c2-2} holds.

Assume \eqref{c2-2} holds for $k-1 \geq k_1 + 1$, i.e., $q^{p^{k-1 - k_1}} \equiv 1 \pmod{p^{k-1}}$. Thus, $q^{p^{k-1 - k_1}} = 1 + b \cdot p^{k-1}$ for some integer $b$. Raising to the $p$-th power,
\[
q^{p^{k - k_1}} = \left( 1 + b \cdot p^{k-1} \right)^p = \sum_{i=0}^p \binom{p}{i} (b \cdot p^{k-1})^i.
\]
For $i \geq 1$, $v_p((b \cdot p^{k-1})^i) = i (k-1) \geq k-1$, and $p \mid \binom{p}{i}$ for $1 \leq i \leq p-1$, so $v_p\left( \binom{p}{i} (b \cdot p^{k-1})^i \right) \geq i (k-1) + 1 \geq k$. Thus, $q^{p^{k - k_1}} \equiv 1 \pmod{p^k}$.

To verify minimality, suppose there exists $t$ with $1 \leq t < p^{k - k_1}$ such that $q^t \equiv 1 \pmod{p^k}$. Then, $v_p(q^t - 1) \geq k$. By Lemma \ref{lem2.3}(a), $v_p(q^t - 1) = v_p(q-1) + v_p(t) = k_1 + v_p(t)$. Since $t < p^{k - k_1}$, $v_p(t) < k - k_1$, so $v_p(q^t - 1) < k_1 + (k - k_1) = k$, a contradiction. Thus, $\text{ord}_{p^k}(q) = p^{k - k_1}$ for $k > k_1$.

\textbf{Case 2: $p \mid q+1$.} Let $k_2 = v_p(q+1) = v_p(q^2 - 1)$ (since $p \nmid q-1$). For $1 \leq k \leq k_2$, since $q^2 - 1 = (q-1)(q+1)$ and $p^k \mid q+1$, we have $q^2 \equiv 1 \pmod{p^k}$, so $\text{ord}_{p^k}(q) = 2$. For $k > k_2$, the proof that $\text{ord}_{p^k}(q) = 2 p^{k - k_2}$ follows analogously to Case 1, using $q+1 = p^{k_2} \cdot m'$ with $\gcd(p, m') = 1$, and is omitted for brevity.

Combining both cases, part (b) is proved, completing the proof of Lemma \ref{lem2.5}.
\end{proof}

For odd prime power $q$, let $p_1,\ldots,p_s$ be all distinct divisors of $q^2-1$. Then the multiplicative order of $q$ modulo $p_1^{e_1}\cdots p_s^{e_s}$ can be derived from Lemmas \ref{lem2.4} and Lemma \ref{lem2.5}, where $e_1,\ldots,e_s$ are any nonnegative integers, not all zero.

\begin{lem}\label{lem2.6}
\cite[Theorem 3.39]{[LN97]} Let $f(X)$ be a monic irreducible polynomial in $\fq[X]$ of degree $m$. Let $\alpha$ be a root of $f(X)$ in $\mathbb{F}_{q^m}$, and for positive integer $t$ let $G_t(X)$ be the characteristic polynomial of $\alpha^t\in\mathbb{F}_{q^m}$ over $\fq$. Then
$$G_t(X^t)=(-1)^{m(t+1)}\prod_{j=1}^tf(w_jX),$$
where $w_1,\ldots,w_t$ are the $t$-th roots of unity over $\fq$ counted according to multiplicity.
\end{lem}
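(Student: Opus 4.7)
My plan is to expand both sides of the claimed identity in terms of the Galois conjugates of $\alpha$ over $\fq$ and reduce everything to the polynomial identity $\prod_{j=1}^{t}(Y-w_{j})=Y^{t}-1$ that defines the $t$-th roots of unity with multiplicity.

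First I would record the factored forms of the two principal polynomials. Since $f(X)\in\fq[X]$ is monic irreducible of degree $m$ with root $\alpha$, its roots in $\mathbb{F}_{q^{m}}$ are the Frobenius conjugates $\alpha,\alpha^{q},\dots,\alpha^{q^{m-1}}$, and hence
$$f(X)=\prod_{i=0}^{m-1}(X-\alpha^{q^{i}}).$$
Because $G_{t}(X)$ is the characteristic polynomial of the $\fq$-linear map ``multiplication by $\alpha^{t}$'' on $\mathbb{F}_{q^{m}}$, it has degree $m$ and its roots counted with multiplicity are exactly the Galois conjugates $\alpha^{tq^{i}}$ for $i=0,\dots,m-1$; thus
$$G_{t}(X^{t})=\prod_{i=0}^{m-1}(X^{t}-\alpha^{tq^{i}}).$$

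Next I would evaluate the right-hand side. Substituting $w_{j}X$ into $f$ and interchanging the order of the double product,
$$\prod_{j=1}^{t}f(w_{j}X)=\prod_{i=0}^{m-1}\prod_{j=1}^{t}(w_{j}X-\alpha^{q^{i}}).$$
The pivotal identity is $\prod_{j=1}^{t}(Y-w_{j}X)=Y^{t}-X^{t}$, which is the homogenization of $\prod_{j=1}^{t}(Y-w_{j})=Y^{t}-1$ and therefore holds as a polynomial identity in any characteristic. Setting $Y=\alpha^{q^{i}}$ and pulling out a factor of $(-1)^{t}$ from each of the $t$ linear factors to match the sign convention $(w_{j}X-\alpha^{q^{i}})$ gives
$$\prod_{j=1}^{t}(w_{j}X-\alpha^{q^{i}})=(-1)^{t}(\alpha^{tq^{i}}-X^{t})=(-1)^{t+1}(X^{t}-\alpha^{tq^{i}}).$$
Taking the product over $i=0,\dots,m-1$ produces the global sign $(-1)^{m(t+1)}$ together with $\prod_{i=0}^{m-1}(X^{t}-\alpha^{tq^{i}})=G_{t}(X^{t})$, which rearranges to the desired identity.

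I do not anticipate a genuine obstacle, as the argument is a direct comparison of two factorizations. The only point that deserves a brief remark is the case when the characteristic $r$ of $\fq$ divides $t$: here the $t$-th roots of unity have repetitions (the factor $X^{t}-1=(X^{t/r^{v_{r}(t)}}-1)^{r^{v_{r}(t)}}$), but the defining identity $\prod_{j=1}^{t}(Y-w_{j})=Y^{t}-1$ is valid as a polynomial identity regardless, so no case split is needed and the argument goes through uniformly.
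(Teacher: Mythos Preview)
Your argument is correct. The factorizations of $f$ and of $G_t$ via Frobenius conjugates are the standard ones, and the identity $\prod_{j=1}^{t}(Y-w_jX)=Y^{t}-X^{t}$ (valid in any characteristic precisely because the $w_j$ are taken with multiplicity) cleanly gives the inner product $\prod_{j=1}^{t}(w_jX-\alpha^{q^i})=(-1)^{t+1}(X^{t}-\alpha^{tq^{i}})$, from which the sign $(-1)^{m(t+1)}$ and the claimed identity follow at once.

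There is nothing to compare against: the paper does not prove this lemma at all but merely quotes it as \cite[Theorem~3.39]{[LN97]}. Your write-up is essentially the proof one finds in Lidl--Niederreiter, so no alternative approach is at play here.
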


\begin{lem}\label{lem2.7}
\cite[Lemma 2]{[BMOV15]} Let $m$ and $n$ be positive integers, and let $a\in\fq^*$ be of order $M$ in the group $\fq^*$. Then $X^m-a$ divides $X^n-1$ if and only if $mM$ divides $n$.
\end{lem}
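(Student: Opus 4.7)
The plan is to prove the equivalence by separating the two directions; the backward direction is essentially a one-line polynomial substitution, while the forward direction rests on exhibiting a root of $X^m - a$ of order exactly $mM$ in $\overline{\fq}$. For the backward direction, if $n = mMk$ then $a^{Mk} = 1$ (since $a^M = 1$), so $Y - a$ divides $Y^{Mk} - 1$ in $\fq[Y]$, and substituting $Y = X^m$ gives $X^m - a \mid X^{mMk} - 1 = X^n - 1$.

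For the forward direction, suppose $X^m - a \mid X^n - 1$ and let $p$ denote the characteristic of $\fq$; note $\gcd(M, p) = 1$ since $M \mid q - 1$. I first handle the case $\gcd(m, p) = 1$, in which $X^m - a$ has $m$ distinct roots in $\overline{\fq}$, and the key step is to find a root $\beta$ of order exactly $mM$, so that $\beta^n = 1$ forces $mM \mid n$. To build $\beta$, pick $\zeta \in \overline{\fq}^*$ of order $mM$ (available because $\gcd(mM, p) = 1$); then $\zeta^m$ has order $M$ and therefore lies in the cyclic group of $M$-th roots of unity, which is generated by $a$, so $\zeta^m = a^j$ for some $j$ with $\gcd(j, M) = 1$. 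Choose $s$ with $js \equiv 1 \pmod{M}$, and adjust $s$ modulo $M$ via a short CRT argument to additionally ensure $\gcd(s, m) = 1$; the potential obstruction at primes dividing both $m$ and $M$ vanishes automatically because $js \equiv 1 \pmod{M}$ forces $\gcd(s, M) = 1$. Then $\beta := \zeta^s$ satisfies $\beta^m = a^{js} = a$, and $\beta$ has order $mM/\gcd(mM, s) = mM$, as required.

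For $p \mid m$, write $m = m' p^u$ with $\gcd(m', p) = 1$ and let $a' \in \fq$ be the unique element with $(a')^{p^u} = a$ (Frobenius is bijective on $\fq$). Then $X^m - a = (X^{m'} - a')^{p^u}$, and $a'$ still has order $M$ because $\gcd(p, q - 1) = 1$. Writing $n = n' p^v$ similarly, the divisibility $(X^{m'} - a')^{p^u} \mid (X^{n'} - 1)^{p^v}$ of two powers of squarefree polynomials is equivalent to $X^{m'} - a' \mid X^{n'} - 1$ together with $u \le v$; by the coprime case these conditions read $m'M \mid n'$ and $p^u \mid p^v$, which combine (using $\gcd(m'M, p) = 1$) to $mM \mid n$. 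The main obstacle is verifying the existence of a root of $X^m - a$ of order exactly $mM$, which hinges on the explicit construction above exploiting the cyclic structure of finite subgroups of $\overline{\fq}^*$; the remaining steps are routine polynomial manipulation.
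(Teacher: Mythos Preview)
Your proof is correct. The paper itself does not supply a proof of this lemma: it merely cites \cite[Lemma~2]{[BMOV15]}, so there is no argument in the paper to compare against. Your backward direction is the standard substitution, and your forward direction---constructing an explicit root $\beta$ of $X^m-a$ of exact order $mM$ via an element $\zeta$ of order $mM$ in $\overline{\fq}^*$ and a CRT adjustment of the exponent $s$, then reducing the case $p\mid m$ to the coprime case through the Frobenius factorisation $X^m-a=(X^{m'}-a')^{p^u}$---is clean and complete. The only point worth emphasising in a write-up is that $\gcd(s,M)=1$ and $\gcd(s,m)=1$ together give $\gcd(s,mM)=1$ (since any prime dividing $mM$ divides $m$ or $M$), which is what you use to conclude that $\zeta^s$ has order exactly $mM$.
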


\section{Proof of Theorem \ref{thm1.2}}

In this section, we present the proof of Theorem \ref{thm1.2}.

\begin{proof}[Proof of Theorem \ref{thm1.2}]
Let $q$ be a power of an odd prime $r$, and assume $\gcd(n, r) = 1$. We prove the theorem by establishing necessity and sufficiency.

\textbf{Necessity.} Suppose $X^n - 1$ is $3$-sparse over $\mathbb{F}_q$, meaning its irreducible factors over $\mathbb{F}_q$ are binomials or trinomials. Let $p$ be a prime divisor of $n$. We aim to show that $p$ divides $q^2 - 1$. Since $X^p - 1$ divides $X^n - 1$, it follows that $X^p - 1$ is also $3$-sparse over $\mathbb{F}_q$. By Lemma \ref{lem2.2}, the $p$-th cyclotomic polynomial $\Phi_p(X)$ divides $X^p - 1$, so its irreducible factors over $\mathbb{F}_q$ are binomials or trinomials.

We first claim that $\Phi_p(X)$ has no binomial factors. Suppose $\Phi_p(X)$ has an irreducible factor of the form $X^m - a$, where $1 \leq m \leq p-1$ and $a \in \mathbb{F}_q^*$. By Lemma \ref{lem2.7}, since $X^m - a$ divides $X^p - 1$, the order $M$ of $a$ in $\mathbb{F}_q^*$ satisfies $mM \mid p$. As $M$ divides $q-1$ and $p$ does not divide $q^2 - 1$, we have $M = m = 1$. Thus, $X - 1$ divides $\Phi_p(X)$, which contradicts the fact that $\Phi_p(X)$ is coprime with $X - 1$. Hence, $\Phi_p(X)$ has no binomial factors.

By Lemma \ref{lem2.2}, the irreducible factorization of $\Phi_p(X)$ over $\mathbb{F}_q$ is
\begin{equation}\label{eq:c3-1}
\Phi_p(X) = \prod_{i=1}^{(p-1)/t} (X^t + a_i X^{k_i} + b_i),
\end{equation}
where $a_i, b_i \in \mathbb{F}_q^*$, $1 \leq k_i < t$, and $t = \text{ord}_p(q)$ is the order of $q$ modulo $p$. Each factor $X^t + a_i X^{k_i} + b_i$ is an irreducible trinomial, and all factors are distinct.

Since $\Phi_p(X) = X^{p-1} + \cdots + X + 1$, comparing the coefficient of $X$ in \eqref{eq:c3-1} implies that $\Phi_p(X)$ has an irreducible factor $g(X) = X^t + a X + b$, with $a, b \in \mathbb{F}_q^*$. Let $\xi$ be a root of $g(X)$, a primitive $p$-th root of unity in $\mathbb{F}_{q^t}$. Since $p$ is odd, $\xi^2$ is also a primitive $p$-th root of unity. Let $g_2(X)$ be the minimal polynomial of $\xi^2$ over $\mathbb{F}_q$, which is an irreducible factor of $\Phi_p(X)$ of degree $t$.

By Lemma \ref{lem2.7}, the characteristic polynomial $G_2(X)$ of $\xi^2 \in \mathbb{F}_{q^t}$ over $\mathbb{F}_q$ satisfies
\[
G_2(X^2) = (-1)^t g(X) g(-X) =
\begin{cases} 
X^{2t} + 2b X^t - a^2 X^2 + b^2, & \text{if } t \text{ is even}, \\
X^{2t} + 2a X^{t+1} + a^2 X^2 - b^2, & \text{if } t \text{ is odd}.
\end{cases}
\]
Thus,
\[
G_2(X) =
\begin{cases} 
X^t + 2b X^{t/2} - a^2 X + b^2, & \text{if } t \text{ is even}, \\
X^t + 2a X^{(t+1)/2} + a^2 X - b^2, & \text{if } t \text{ is odd}.
\end{cases}
\]
Since $g_2(X)$ divides $G_2(X)$ and both have degree $t$, we have $g_2(X) = G_2(X)$. As $p$ does not divide $q^2 - 1$, it follows that $t = \text{ord}_p(q) \geq 3$. Thus, $g_2(X)$ is a quadrinomial, contradicting the assumption that all irreducible factors of $\Phi_p(X)$ are trinomials. Hence, $p$ must divide $q^2 - 1$, proving necessity.

\textbf{Sufficiency.} Let $P = \{p_1, \ldots, p_s\}$ be the set of prime divisors of $q^2 - 1$, and let $n = p_1^{e_1} \cdots p_s^{e_s}$ for some nonnegative integers $e_1, \ldots, e_s$. We need to show that $X^n - 1$ is $3$-sparse over $\mathbb{F}_q$. If $n = 1$, the result is trivial. Assume $n \geq 2$. By Lemma \ref{lem2.2}, it suffices to show that for every divisor $d \geq 2$ of $n$, the cyclotomic polynomial $\Phi_d(X)$ factors into irreducible binomials or trinomials over $\mathbb{F}_q$. Without loss of generality, let $d = p_1^{f_1} \cdots p_k^{f_k}$ with $f_i \geq 1$ and $k \leq s$.

Define $v_i$ as the $p_i$-adic valuation of $q^2 - 1$ if $p_i \neq 2$, and the $2$-adic valuation of $q - 1$ if $p_i = 2$. We consider two cases:

\textbf{Case 1: $f_i \leq v_i$ for all $1 \leq i \leq k$.} By Lemmas \ref{lem2.4} and \ref{lem2.5}, we have
\[
\text{ord}_d(q) = \lcm(\text{ord}_{p_1^{f_1}}(q), \ldots, \text{ord}_{p_k^{f_k}}(q)) = 1 \text{ or } 2.
\]
By Lemma \ref{lem2.2}, $\Phi_d(X)$ factors into irreducible polynomials over $\mathbb{F}_q$ of degree $1$ or $2$, which are binomials or trinomials, as required.

\textbf{Case 2: $f_i > v_i$ for some $1 \leq i \leq k$.} Suppose there are exactly $u$ indices $1 \leq i_1 < \cdots < i_u \leq k$ such that $f_{i_j} > v_{i_j}$ for $1 \leq j \leq u$. Write $d = p_{i_1}^{f_{i_1}} \cdots p_{i_u}^{f_{i_u}} D$ and $d_0 = p_{i_1}^{v_{i_1}} \cdots p_{i_u}^{v_{i_u}} D$. By Lemma \ref{lem2.1},
\[
\Phi_{d_0 p_{i_1}^{f_{i_1} - v_{i_1}}}(X) = \Phi_{d_0}(X^{p_{i_1}^{f_{i_1} - v_{i_1}}}) = \prod_i (X^{t_0 p_{i_1}^{f_{i_1} - v_{i_1}}} + a_i X^{k_i p_{i_1}^{f_{i_1} - v_{i_1}}} + b_i),
\]
where $\prod_i (X^{t_0} + a_i X^{k_i} + b_i)$ is the irreducible factorization of $\Phi_{d_0}(X)$ over $\mathbb{F}_q$, with $t_0 = \text{ord}_{d_0}(q)$ from Case 1. By Lemmas \ref{lem2.4} and \ref{lem2.5},
\[
\text{ord}_{d_0 p_{i_1}^{f_{i_1} - v_{i_1}}}(q) = p_{i_1}^{f_{i_1} - v_{i_1}} t_0.
\]
Thus, the above is the irreducible factorization of $\Phi_{d_0 p_{i_1}^{f_{i_1} - v_{i_1}}}(X)$. Iterating this process, we obtain the irreducible factorization of $\Phi_d(X)$ over $\mathbb{F}_q$:
\[
\Phi_d(X) = \prod_i (X^{t_0 p_{i_1}^{f_{i_1} - v_{i_1}} \cdots p_{i_u}^{f_{i_u} - v_{i_u}}} + a_i X^{k_i p_{i_1}^{f_{i_1} - v_{i_1}} \cdots p_{i_u}^{f_{i_u} - v_{i_u}}} + b_i).
\]
Hence, for any divisor $d$ of $n$, $\Phi_d(X)$ factors into irreducible binomials or trinomials over $\mathbb{F}_q$, proving sufficiency.

This completes the proof of Theorem \ref{thm1.2}.
\end{proof}

\section{Examples of factorizations}

In this section, we present two illustrative examples to demonstrate the process of obtaining the irreducible factorization of $X^n - 1$ when it is $3$-sparse over the finite fields $\mathbb{F}_q$ for $q = 3$ and $q = 9$.

\begin{exa}
Let $f_n(X)=X^n-1$ be $3$-sparse over $\fth$. Then by Theorem \ref{thm1.2}, we know that $n=2^k$ for some nonnegative integer $k$. Now we compute the irreducible factorization of $f_{2^k}(X)$ over $\mathbb{F}_3$. For $k \leq 3$, we obtain:
\begin{align*}
f_{2^0}(X) &= X - 1, \\
f_{2^1}(X) &= (X - 1)(X + 1), \\
f_{2^2}(X) &= (X - X1)(X + 1)(X^2 + 1), \\
f_{2^3}(X) &= (X - _1)(X + 1)(X^2 + 1) \prod_{u \in \mathbb{F}_3^*} (X^2 + uX + 2).
\end{align*}
For $k \geq 4$, Lemma \ref{lem2.2} gives
\begin{equation}\label{c4-1}
f_{2^k}(X) = \prod_{i=0}^k \Phi_{2^i}(X) = f_{2^3}(X) \prod_{i=4}^k \Phi_{2^i}(X).
\end{equation}
Note that $\Phi_8(X)$ has the irreducible factorization:
$$\Phi_8(X) = \prod_{u \in \mathbb{F}_3^*} (X^2 + uX + 2).$$
Then, for $4 \leq i \leq k$, Lemma \ref{lem2.1} yields
\begin{equation}\label{c4-2}
\Phi_{2^i}(X) = \Phi_8(X^{2^{i-3}}) = \prod_{u \in \mathbb{F}_3^*} (X^{2^{i-2}} + uX^{2^{i-3}} + 2).
\end{equation}
By Lemmas \ref{lem2.2} and \ref{lem2.4}(a), $\Phi_{2^i}(X)$ splits into two irreducible polynomials of degree $2^{i-2}$ over $\mathbb{F}_3$, verifying that \eqref{c4-2} is indeed the irreducible factorization of $\Phi_{2^i}(X)$. Combining \eqref{c4-1} and \eqref{c4-2}, we get
$$f_{2^k}(X) = (X - 1)(X + 1)(X^2 + 1) \prod_{i=3}^k \prod_{u \in \mathbb{F}_3^*} (X^{2^{i-2}} + uX^{2^{i-3}} + 2).$$
\end{exa}

\begin{exa}
Let $\mathbb{F}_9 = \mathbb{F}_3(\alpha)$, with $\alpha^2=-1$, and let $f_n(X) = X^n - 1$ be $3$-sparse over $\mathbb{F}_9$. Theorem \ref{thm1.2} tells $n = 2^{k_1} \cdot 5^{k_2}$ for nonnegative integers $k_1, k_2$. We derive the irreducible factorization of $f_n(X)$ over $\mathbb{F}_9$ across several cases.

\textbf{Case 1: $k_2 = 0$.} For $k_1 \leq 3$, we compute:
\begin{align*}
f_{2^0}(X) &= X - 1, \\
f_{2^1}(X) &= (X - 1)(X + 1), \\
f_{2^2}(X) &= (X - 1)(X + 1)(X + \alpha)(X + 2\alpha), \\
f_{2^3}(X) &= \prod_{\gamma \in \{\pm 1, \pm \alpha\}} (X + \gamma) \prod_{u, v \in \mathbb{F}_3^*} (X + u\alpha + v).
\end{align*}
For $k_1 \geq 4$, Lemma \ref{lem2.2} gives
\begin{equation}\label{c4-3}
f_{2^{k_1}}(X) = \prod_{i=0}^{k_1} \Phi_{2^i}(X) = f_{2^3}(X) \prod_{i=4}^{k_1} \Phi_{2^i}(X).
\end{equation}
The factorization of $\Phi_8(X)$ is
\[
\Phi_8(X) = \prod_{u, v \in \mathbb{F}_3^*} (X + u\alpha + v).
\]
For $4 \leq i \leq k_1$, Lemma \ref{lem2.1} yields
\begin{equation}\label{c4-4}
\Phi_{2^i}(X) = \Phi_8(X^{2^{i-3}}) = \prod_{u, v \in \mathbb{F}_3^*} (X^{2^{i-3}} + u\alpha + v).
\end{equation}
Lemmas \ref{lem2.2} and \ref{lem2.4}(b) confirm that $\Phi_{2^i}(X)$ splits into polynomials of degree $2^{i-3}$, so \eqref{c4-4} is the irreducible factorization. Combining \eqref{c4-3} and \eqref{c4-4}, we get
\[
f_{2^{k_1}}(X) = \prod_{\gamma \in \{\pm 1, \pm \alpha\}} (X + \gamma) \prod_{i=3}^{k_1} \prod_{u, v \in \mathbb{F}_3^*} (X^{2^{i-3}} + u\alpha + v), \quad \text{if } k_1 \geq 3.
\]

\textbf{Case 2: $k_1 = 0$, $k_2 \geq 1$.} Lemma \ref{lem2.2} gives
\begin{equation}\label{c4-5}
f_{5^{k_2}}(X) = (X - 1) \prod_{i=1}^{k_2} \Phi_{5^i}(X).
\end{equation}
The factorization of $\Phi_5(X)$ is
\[
\Phi_5(X) = \prod_{u \in \mathbb{F}_3^*} (X^2 + (u\alpha + 2)X + 1).
\]
For $1 \leq i \leq k_2$,
\begin{equation}\label{c4-6}
\Phi_{5^i}(X) = \Phi_5(X^{5^{i-1}}) = \prod_{u \in \mathbb{F}_3^*} (X^{2 \cdot 5^{i-1}} + (u\alpha + 2)X^{5^{i-1}} + 1).
\end{equation}
Lemmas \ref{lem2.2} and \ref{lem2.4} show $\Phi_{5^i}(X)$ factors into $\frac{\phi(5^i)}{2 \cdot 5^{i-1}}$ polynomials of degree $2 \cdot 5^{i-1}$, confirming that \eqref{c4-6} is the irreducible factorization over $\mathbb{F}_9$. Substituting into \eqref{c4-5} gives the factorization of $f_{5^{k_2}}(X)$.

\textbf{Case 3: $k_1 = 1$, $k_2 \geq 1$.} Lemma \ref{lem2.2} yields
\begin{equation}\label{c4-7}
f_{2 \cdot 5^{k_2}}(X) = (X - 1)(X + 1) \prod_{i=1}^{k_2} \Phi_{5^i}(X) \prod_{j=1}^{k_2} \Phi_{2 \cdot 5^j}(X).
\end{equation}
From Case 2, we have the irreducible factorization of $\Phi_{5^i}(X)$. Note that $\Phi_{10}(X)$ factors as
$$\Phi_{10}(X) = \prod_{u \in \mathbb{F}_3^*} (X^2 + (u\alpha + 1)X + 1).$$
By Lemma \ref{lem2.1}, for $1 \leq i \leq k_2$,
\begin{equation}\label{c4-8}
\Phi_{2 \cdot 5^i}(X) = \Phi_{10}(X^{5^{i-1}}) = \prod_{u \in \mathbb{F}_3^*} (X^{2 \cdot 5^{i-1}} + (u\alpha + 1)X^{5^{i-1}} + 1).
\end{equation}
Lemmas \ref{lem2.2} and \ref{lem2.4} confirm $\Phi_{2 \cdot 5^i}(X)$ factors into $\frac{\phi(2 \cdot 5^i)}{2 \cdot 5^{i-1}}$ polynomials of degree $2 \cdot 5^{i-1}$. It follows that \eqref{c4-8} is the irreducible factorization over $\mathbb{F}_9$. Substituting \eqref{c4-6} and \eqref{c4-8} into \eqref{c4-7} gives the factorization of $f_{2 \cdot 5^{k_2}}(X)$.

\textbf{Case 4: $2 \leq k_1 \leq 3$, $k_2 \geq 1$.} Similar to Case 3, we obtain:
\begin{align*}
f_{2^2 \cdot 5^{k_2}}(X) &= \prod_{\gamma \in \{\pm 1, \pm \alpha\}} (X + \gamma) \prod_{i=1}^{k_2} F_i(X), \\
f_{2^3 \cdot 5^{k_2}}(X) &= \prod_{\gamma \in \mathbb{F}_9^*} (X + \gamma) \prod_{i=1}^{k_2} F_i(X) G_i(X),
\end{align*}
where
\begin{align}\label{c4-9}
F_i(X) &= \prod_{u, v, w \in \mathbb{F}_3^*} (X^{2 \cdot 5^{i-1}} + (u\alpha + v)X^{5^{i-1}} + w), \\ 
\label{c4-10}
G_i(X) &= \prod_{\substack{u, v \in \mathbb{F}_3^* \\ t \in \{0, 1\}}} (X^{2 \cdot 5^{i-1}} + u\alpha^t X^{5^{i-1}} + v\alpha).
\end{align}

\textbf{Case 5: $k_1 \geq 4$, $k_2 \geq 1$.} Lemma \ref{lem2.2} gives
\begin{equation}\label{c4-11}
f_{2^{k_1} \cdot 5^{k_2}}(X) = f_{2^{k_1}}(X) \left( \prod_{j=0}^{3} \prod_{i=1}^{k_2} \Phi_{2^j \cdot 5^i}(X) \right) \left( \prod_{j=4}^{k_1} \prod_{i=1}^{k_2} \Phi_{2^j \cdot 5^i}(X) \right).
\end{equation}
From prior cases, we have factorizations for $f_{2^{k_1}}(X)$ and $\Phi_{2^j \cdot 5^i}(X)$ for $0 \leq j \leq 3$ and $1\le i\le k_2$. From Case 4, we know that $\Phi_{80}(X)$ factors as
$$\Phi_{80}(X) = \prod_{(\gamma_1, \gamma_2) \in S} (X^2 + \gamma_1 X + \gamma_2),$$
where $S = \{ (\pm 1, \pm \alpha - 1), (\pm (\alpha + 1), \alpha \pm 1), (\pm \alpha, \pm \alpha + 1), (\pm (\alpha - 1), -\alpha \pm 1) \}$. For $4 \leq j \leq k_1$, $1 \leq i \leq k_2$, Lemma \ref{lem2.1} gives
\begin{equation}\label{c4-12}
\Phi_{2^j \cdot 5^i}(X) = \Phi_{2^4 \cdot 5}(X^{2^{j-4} \cdot 5^{i-1}}) = \prod_{(\gamma_1, \gamma_2) \in S} (X^{2^{j-3} \cdot 5^{i-1}} + \gamma_1 X^{2^{j-4} \cdot 5^{i-1}} + \gamma_2).
\end{equation}
Lemmas \ref{lem2.2} and \ref{lem2.4} confirm $\Phi_{2^j \cdot 5^i}(X)$ factors into $\frac{\phi(2^j \cdot 5^i)}{2^{j-3} \cdot 5^{i-1}}$ irreducible polynomials of degree $2^{j-3} \cdot 5^{i-1}$, implying that \eqref{c4-12} is the irreducible factorization. Substituting \eqref{c4-12} into \eqref{c4-11} gives the irreducible factorization of $f_{2^{k_1} \cdot 5^{k_2}}(X)$:
\begin{align*}
X^{2^{k_1}\cdot5^{k_2}}-1&=\prod_{\gamma\in\{\pm 1,\pm\alpha\}}(X+\gamma)\prod_{j=3}^{k_1}E_j(X)\prod_{i=1}^{k_2}F_i(X)G_i(X)\prod_{k=4}^{k_1}H_{ik}(X),\ \text{if}\ k_1\ge 4,
\end{align*}
where
\begin{align*}
E_j(X)&=\prod_{u,v\in\fth^*}(X^{2^{j-3}}+u\alpha+v),\\
H_{ik}(X)&=\prod_{(\gamma_1,\gamma_2)\in S}(X^{2^{k-3}\cdot5^{i-1}}+\gamma_1X^{2^{k-4}\cdot 5^{i-1}}+\gamma_2),
\end{align*}
and where $F_i(X)$ and $G_i(X)$ are defined as \eqref{c4-9} and \eqref{c4-10}.
\end{exa}

\section*{Conflict of Interest}
The author declares that there is no conflict of interest.

\section*{acknowledgment}

This work was conducted during the author’s academic visit to RICAM, Austrian Academy of Sciences. The author sincerely thanks Professor Arne Winterhof at RICAM for his thorough review of the manuscript and for offering invaluable suggestions.

This research was partially supported by the China Scholarship Council Fund (Grant No.\ 202301010002) and the Scientific Research Innovation Team Project of China West Normal University (Grant No.\ KCXTD2024-7).

%

\end{document}